\def\OO{{\mathcal O}}
\def\D{\mathbf{D}}
\def\Pic{{\rm Pic}}
\def\Aut{{\rm Aut}}
\def\id{{\rm id}}
\def\Aff{{\rm Aff}}
\newcommand{\FM}[1]{\Phi_{#1}}
\newcommand{\alb}{\textnormal{alb}}
\newcommand{\Alb}{\textnormal{Alb}}
\newcommand{\FF}{\mathcal{F}}
\newcommand{\EE}{\mathcal{E}}
\theoremstyle{plain}
\newtheorem{theorem}{Theorem}[section]
\newtheorem{theoremalpha}{Theorem}
\newtheorem{corollaryalpha}[theoremalpha]{Corollary}
\newtheorem{proposition/example}[theorem]{Proposition/Example}
\newtheorem{corollary}[theorem]{Corollary}
\newtheorem{lemma}[theorem]{Lemma}
\theoremstyle{definition}
\newtheorem{remark}[theorem]{Remark}
\newtheorem{conjecture/question}[theorem]{Conjecture/Question}
\newtheorem{remark/definition}[theorem]{Remark/Definition}
\newtheorem{definition/notation}[theorem]{Definition/Notation}
\theoremstyle{remark}
\numberwithin{equation}{section}
\newcommand{\Ker}{\operatorname{Ker}}
\renewcommand{\Im}{\operatorname{Im}}
\newcommand{\menge}[2]{\bigl\{ \thinspace #1 \thinspace\thinspace \big\vert%
\thinspace\thinspace #2 \thinspace \bigr\}}
\keywords{Derived categories, Picard variety, Hodge numbers}
\subjclass[2000]{14F05, 14K30}
\begin{document}

\title[Derived invariance of the number of holomorphic $1$-forms]{Derived invariance of the 
number of holomorphic $1$-forms and vector fields}

\author{Mihnea Popa}
\address{Department of Mathematics, University of Illinois at Chicago,
851 S. Morgan Street, Chicago, IL 60607, USA } \email{{\tt
mpopa@math.uic.edu}}
\thanks{First author partially supported by NSF grant DMS-0758253 and a Sloan Fellowship}

\author{Christian Schnell}
\address{Department of Mathematics, University of Illinois at Chicago,
851 S. Morgan Street, Chicago, IL 60607, USA } \email{{\tt
cschnell@math.uic.edu}}



\setlength{\parskip}{.09 in}

\maketitle

\section{introduction}

Given a smooth projective variety $X$, we denote by $\D(X)$ the bounded derived 
category of coherent sheaves $\D^{{\rm b}} ({\rm Coh}(X))$.  All varieties we consider below are over the complex 
numbers. 
A result of Rouquier, \cite{rouquier} Th\'eor\'eme 4.18, asserts that if  $X$ and $Y$ are smooth projective varieties with 
$\D(X) \simeq \D(Y)$ (as linear triangulated categories), then there is an isomorphism of algebraic groups
$$\Aut^0 (X) \times \Pic^0 (X) \simeq  \Aut^0 (Y) \times \Pic^0 (Y).$$
We refine this by showing that each of the two factors is almost invariant under derived equivalence. 
According to Chevalley's theorem $\Aut^0 (X)$, the connected component of the identity in $\Aut (X)$, has a unique maximal connected affine subgroup $\Aff(\Aut^0(X))$, and the quotient $\Alb(\Aut^0 (X))$ by this subgroup is an abelian variety,  the Albanese variety of 
$\Aut^0(X)$. The affine parts $\Aff(\Aut^0(X))$ and $\Aff(\Aut^0(Y))$, being also the affine parts of the two sides in the isomorphism above, are isomorphic. The main result of the paper is

\begin{theoremalpha}\label{isogeny}
Let $X$ and $Y$ be smooth projective varieties such that $\D(X) \simeq \D(Y)$. Then 

\noindent
(1) ${\rm Pic}^0 (X)$ and ${\rm Pic}^0 (Y)$ are isogenous; equivalently, $\Alb(\Aut^0 (X))$ and $\Alb(\Aut^0 (Y))$ are isogenous.

\noindent
(2) ${\rm Pic}^0 (X) \simeq {\rm Pic}^0 (Y)$ unless $X$ and $Y$ are \'etale locally trivial fibrations over isogenous 
positive dimensional abelian varieties (hence $\chi (\OO_X) = \chi (\OO_Y)= 0$). 
\end{theoremalpha}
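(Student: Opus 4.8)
The plan is to move the whole problem into the category of abelian varieties up to isogeny and then to feed in the structure theory of the automorphism group. Rouquier's isomorphism $\Aut^0(X)\times\Pic^0(X)\simeq\Aut^0(Y)\times\Pic^0(Y)$ is an isomorphism of algebraic groups, so applying the Albanese functor (the maximal abelian-variety quotient $G\mapsto G/\Aff(G)$, which carries a product to the product of its factors and fixes the already-abelian factor $\Pic^0$) produces an isomorphism of abelian varieties
$$
\Alb(\Aut^0(X))\times\Pic^0(X)\;\simeq\;\Alb(\Aut^0(Y))\times\Pic^0(Y).
$$
Write $A_X=\Alb(\Aut^0(X))$ and $P_X=\Pic^0(X)$, similarly for $Y$. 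Since abelian varieties up to isogeny form a semisimple category (Poincar\'e reducibility), factors may be cancelled up to isogeny; hence $P_X\sim P_Y$ if and only if $A_X\sim A_Y$, which is the equivalence of the two formulations in (1). I also record, as in the paragraph preceding the statement, that the affine parts $\Aff(\Aut^0(X))$ and $\Aff(\Aut^0(Y))$ agree.

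The first substantive step is a lemma placing $A_X$ inside $P_X$: I claim $\Alb(\Aut^0(X))$ is isogenous to an abelian subvariety of $\Pic^0(X)$. To prove it I would use the action on the Albanese torus: by the theorem of Nishi and Matsumura the induced homomorphism $\Aut^0(X)\to\Alb(X)$ has linear kernel and its image $A_0$ is an abelian subvariety of $\Alb(X)$; since the maximal connected affine subgroup dies under any map to an abelian variety, the induced morphism $A_X\to A_0$ is an isogeny. Composing with a polarization isogeny $\Alb(X)\sim\Pic^0(X)$ (the two abelian varieties are dual) realizes $A_X$, up to isogeny, as an abelian subvariety of $P_X$. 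This lemma comes with the dichotomy that governs part (2): $A_X\neq0$ exactly when $\Aut^0(X)$ is non-affine, and in that case the structure theory presents $X$, through the action of $A_0$ and the Albanese map, as an \'etale-locally-trivial fibration over a positive-dimensional abelian variety isogenous to $A_X$; the vanishing $\chi(\OO_X)=0$ then follows because the base already has vanishing Euler characteristic.

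Granting the lemma, both assertions reduce to a single statement: $A_X$ and $A_Y$ are isogenous. Indeed, $A_X\sim A_Y$ yields $P_X\sim P_Y$ by cancellation, which is (1); and $A_X\sim A_Y$ forces $A_X=0\Leftrightarrow A_Y=0$, so either both vanish --- whereupon the displayed isomorphism reads literally $P_X\simeq P_Y$, the unconditional case of (2) --- or both are nonzero, placing $X$ and $Y$ simultaneously in the exceptional fibration situation over the isogenous bases $A_X\sim A_Y$. Thus the entire theorem rests on proving the isogeny $A_X\sim A_Y$.

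I expect this to be the genuine obstacle, and it cannot be reached from the group isomorphism alone: the formal data ``$A_X\times P_X\simeq A_Y\times P_Y$ with $A_X,A_Y$ isogenous to subvarieties of $P_X,P_Y$'' does not determine even the dimension of $P_X$, let alone its isogeny class, as one sees already from abstract abelian varieties. The extra information must come from the action of the derived equivalence on cohomology. The route I would take is through Hochschild theory: the HKR decomposition identifies $HH_*(X)\simeq\bigoplus_{p,q}H^p(X,\Omega_X^q)$ as a module over $HH^*(X)$ in a manner preserved by any equivalence, and its degree-one piece $HH^1(X)=H^0(X,T_X)\oplus H^1(X,\OO_X)$ is precisely the Lie algebra of the Rouquier group, with $H^0(X,T_X)$ acting on $HH_*$ by contraction of forms and $H^1(X,\OO_X)$ by cup product. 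Separating these two actions invariantly would first recover $\dim P_X=h^{0,1}(X)$ (the derived invariance of the number of holomorphic $1$-forms), and then, by tracking the weight-one Hodge structure carried by the tangent spaces through the displayed isomorphism, the full isogeny class of $P_X$. The delicate point --- and the reason the two factors are only \emph{almost} invariant --- is that the obvious separation invokes the class of $\OO_X$ in Hochschild homology, which an equivalence need not preserve; making the separation canonical is exactly where one must reintroduce the structure theory of $\Aut^0(X)$ and, in the non-affine case, the fibration over the abelian variety. This reconciliation is the crux of the argument.
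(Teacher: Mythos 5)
Your reduction is sound and in fact mirrors the paper's own bookkeeping: applying $G\mapsto G/\Aff(G)$ to Rouquier's isomorphism gives $\Alb(G_X)\times\Pic^0(X)\simeq\Alb(G_Y)\times\Pic^0(Y)$, cancellation in the semisimple isogeny category makes the two formulations of (1) equivalent, and part (2) does come down to the dichotomy ``$\Aut^0$ affine or not'' together with Brion's fibration lemmas and Nishi--Matsumura. But the theorem is not proved, because the one statement everything hinges on --- that $\Alb(\Aut^0(X))$ and $\Alb(\Aut^0(Y))$ are isogenous --- is exactly the step you leave open. Your proposed route through the $HH^*$-module structure on $HH_*$ requires invariantly separating the action of $H^0(X,T_X)$ from that of $H^1(X,\OO_X)$ on Hochschild homology, and you yourself flag that this separation is not canonical under an equivalence; no argument is given for how to ``reintroduce the structure theory'' to fix this. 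Since a derived equivalence preserves only the sum $h^0(\Omega^1_X)+h^0(T_X)$ a priori, and the whole point of the theorem is to split that sum, the crux is missing, not merely deferred.

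The paper takes a different and entirely geometric route at this point, which you may want to compare. Writing $A\subseteq\Aut^0(X)$ and $B\subseteq\Aut^0(Y)$ for the abelian subvarieties that Rouquier's isomorphism exchanges with pieces of $\Pic^0(Y)$ and $\Pic^0(X)$ respectively, one restricts the Fourier--Mukai kernel $\EE$ along the orbit map $f\colon A\times B\to X\times Y$ through a point of $\Supp\EE$ (Nishi--Matsumura guarantees $f$ has finite kernel on Albanese level, hence $f^*$ is surjective on $\Pic^0$). The transformation law for the kernel under $(\varphi,L)\mapsto(\psi,M)$ shows each $H^i(f^*\EE)$ is a semi-homogeneous bundle on $A\times B$, and Mukai's theorem that its stabilizer subvariety $\Phi(H^i(f^*\EE))$ has dimension exactly $\dim A+\dim B$ yields, after a kernel--image count for the map $\pi(\varphi,L)=(\varphi,\psi,L^{-1}|_A,M|_B)$, the inequality $\dim A+\dim B\geq 2\dim A$, hence $\dim A=\dim B$ by symmetry and then $q(X)=q(Y)$; a refinement of the same count shows $\Im\pi$ is isogenous to both $A\times\widehat{A}$ and $B\times\widehat{B}$, giving the isogeny $A\sim B$ and finally $\Pic^0(X)\sim\Pic^0(Y)$ via $\Ker\beta\simeq\Ker\alpha$. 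Some version of this input from the kernel $\EE$ (or a genuinely new idea in the Hochschild direction) is indispensable; without it your argument establishes only the framework, not the result.
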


The key content is part (1), while (2) simply says that $\Aut^0 (X)$ and $\Aut^0 (Y)$ are affine unless the geometric condition stated there holds (hence the presence of abelian varieties is essentially the only reason for the failure of the derived invariance of the Picard variety).

\begin{corollaryalpha}\label{irregularity}
If $\D(X) \simeq \D(Y)$, then
$$h^0 (X, \Omega_X^1) = h^0 (Y, \Omega_Y^1) \quad \text{and}\quad  h^0 (X, T_X) = h^0 (Y, T_Y).$$
\end{corollaryalpha}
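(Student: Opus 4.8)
The plan is to reduce both equalities to dimension counts for the algebraic groups that appear in Theorem~\ref{isogeny} and in Rouquier's isomorphism, so that all the real content is already absorbed into the main theorem. First I would record the two classical identifications that translate the cohomological quantities into dimensions of algebraic groups. By Hodge theory on the smooth projective variety $X$ one has $h^0 (X, \Omega_X^1) = h^1 (X, \OO_X)$, and $H^1 (X, \OO_X)$ is canonically the tangent space to $\Pic^0 (X)$ at the origin; since $\Pic^0 (X)$ is an abelian variety, and in particular smooth, this gives $h^0 (X, \Omega_X^1) = \dim \Pic^0 (X)$. Similarly, $H^0 (X, T_X)$ is the Lie algebra of the automorphism group scheme, so $h^0 (X, T_X) = \dim \Aut^0 (X)$.

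Granting these identifications, the equality for $1$-forms is immediate from part $(1)$ of Theorem~\ref{isogeny}: isogenous abelian varieties have equal dimension, so $\dim \Pic^0 (X) = \dim \Pic^0 (Y)$ and hence $h^0 (X, \Omega_X^1) = h^0 (Y, \Omega_Y^1)$. (Equivalently, one could read off the same equality from the isogeny of $\Alb (\Aut^0 (X))$ and $\Alb (\Aut^0 (Y))$, using the equivalence recorded in the statement of the theorem.)

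For the vector fields I would invoke Rouquier's isomorphism $\Aut^0 (X) \times \Pic^0 (X) \simeq \Aut^0 (Y) \times \Pic^0 (Y)$ of algebraic groups, which forces the dimension identity $\dim \Aut^0 (X) + \dim \Pic^0 (X) = \dim \Aut^0 (Y) + \dim \Pic^0 (Y)$. Subtracting the equality $\dim \Pic^0 (X) = \dim \Pic^0 (Y)$ just obtained yields $\dim \Aut^0 (X) = \dim \Aut^0 (Y)$, that is $h^0 (X, T_X) = h^0 (Y, T_Y)$.

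At the level of the corollary there is essentially no obstacle: the crucial input is the equality of Picard dimensions furnished by part $(1)$ of Theorem~\ref{isogeny}, and everything else is a formal bookkeeping of dimensions. The only points requiring care are the two identifications in the first step, but both are standard---the first via the Hodge decomposition together with the smoothness of $\Pic^0 (X)$, and the second via the identification of global vector fields with the Lie algebra of $\Aut^0 (X)$. It is worth noting that the argument uses the full strength of part $(1)$, since the weaker conclusion of part $(2)$ (actual isomorphism of Picard varieties) is not needed for an equality of dimensions.
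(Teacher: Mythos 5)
Your proposal is correct and follows essentially the same route as the paper: the paper also deduces the corollary from the derived invariance of the sum $h^0(X,\Omega_X^1)+h^0(X,T_X)$ (via Rouquier's isomorphism) together with the equality $q(X)=q(Y)$, which is exactly the dimension consequence of the isogeny in Theorem~\ref{isogeny}(1). Your preliminary identifications $h^0(X,\Omega_X^1)=\dim \Pic^0(X)$ and $h^0(X,T_X)=\dim \Aut^0(X)$ are standard over $\CC$ and are used implicitly in the paper as well.
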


The Hodge number $h^{1,0}(X)= h^0 (X, \Omega_X^1)$ is also called the
\emph{irregularity} $q(X)$, the dimension of the Picard and Albanese varieties
of $X$. The invariance of the sum $h^0 (X, \Omega_X^1) +  h^0 (X, T_X)$ was already
known, and is a special case of the derived invariance of the Hochschild \emph{co}homology
of $X$ (\cite{orlov}, \cite{caldararu}; cf. also \cite{huybrechts} \S6.1). Alternatively, it follows from Rouquier's result above.
Corollary \ref{irregularity}, together with the derived invariance of Hochschild
homology (cf. \emph{loc. cit.}), implies the invariance of all Hodge numbers for all derived equivalent
threefolds. This was expected to hold as suggested by work of Kontsevich \cite{kontsevich} (cf. also \cite{bk}).

\begin{corollaryalpha}\label{threefolds}
Let $X$ and $Y$ be smooth projective threefolds with $\D(X) \simeq \D(Y)$.  Then
$$h^{p,q}(X) = h^{p,q}(Y)$$
for all $p$ and $q$.
\end{corollaryalpha}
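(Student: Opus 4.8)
The plan is to reduce the equality of all Hodge numbers to the two invariants already in hand: Hochschild homology and Corollary \ref{irregularity}. For a smooth projective threefold the Hodge diamond is highly constrained. Hodge symmetry $h^{p,q} = h^{q,p}$ together with Serre duality $h^{p,q} = h^{3-p,3-q}$ collapse the twelve off-diagonal entries, leaving only the five numbers $h^{1,0}$, $h^{2,0}$, $h^{3,0}$, $h^{1,1}$, $h^{2,1}$ as independent (with $h^{0,0} = h^{3,3} = 1$ since $X$ is connected). Every other $h^{p,q}$ equals one of these. So it suffices to prove that each of these five numbers is a derived invariant.

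Next I would invoke the Hochschild--Kostant--Rosenberg decomposition $HH_i(X) \cong \bigoplus_{p-q=i} H^q(X, \Omega_X^p)$, whose graded dimensions are preserved under $\D(X) \simeq \D(Y)$ by the derived invariance of Hochschild homology cited above. Reading off the graded pieces in dimension three, and applying the two symmetries again to identify equal summands, gives the invariance of $h^{3,0}$ (from $HH_3$); of $2h^{2,0}$, hence of $h^{2,0}$ (from $HH_2$, whose summands are $h^{2,0}$ and $h^{3,1}=h^{2,0}$); of $2 + 2h^{1,1}$, hence of $h^{1,1}$ (from $HH_0$, whose summands are $h^{0,0}, h^{1,1}, h^{2,2}=h^{1,1}, h^{3,3}$); and of $2h^{1,0} + h^{2,1}$ (from $HH_1$, whose summands are $h^{1,0}, h^{2,1}, h^{3,2}=h^{1,0}$). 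Thus Hochschild homology alone determines every Hodge number except that it only pins down the linear combination $2h^{1,0} + h^{2,1}$, failing to separate $h^{1,0}$ from $h^{2,1}$.

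The final step supplies the one missing relation from Corollary \ref{irregularity}: the irregularity $h^{1,0} = h^0(X, \Omega_X^1)$ is a derived invariant. Subtracting twice this quantity from the invariant combination $2h^{1,0} + h^{2,1}$ yields the invariance of $h^{2,1}$, completing the list of five. Since each of the independent Hodge numbers then agrees for $X$ and $Y$, all $h^{p,q}(X) = h^{p,q}(Y)$.

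I expect no serious obstacle: the content lies entirely in the two ingredients already established, and the remaining argument is the bookkeeping that isolates exactly the combination $2h^{1,0}+h^{2,1}$ which Hochschild homology cannot split and which Corollary \ref{irregularity} resolves. The only points requiring care are the correct identification of the graded pieces of $HH_\bullet$ in dimension three and the observation that HKR is an isomorphism of \emph{vector spaces}, so that dimensions add with no cancellation; this is precisely what makes each individual $h^{p,q}$, rather than merely an Euler characteristic, accessible.
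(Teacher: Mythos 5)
Your argument is correct and is essentially the paper's own proof: derived invariance of Hochschild homology gives the invariance of the sums $\sum_{p-q=i} h^{p,q}$, which (after Hodge symmetry and Serre duality) pins down every Hodge number of a threefold except for the single combination involving $h^{1,0}$ and $h^{2,1}$, and Corollary \ref{irregularity} supplies the missing relation. You have merely written out the ``straightforward calculation'' the paper leaves implicit (and your combination $2h^{1,0}+h^{2,1}$ is the accurate one).
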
 
\begin{proof}
The fact that the Hochschild homology of $X$ and $Y$ is the same gives 
\begin{equation}\label{hochschild}
\underset{p-q= i}{\sum} h^{p,q}(X) = \underset{p-q= i}{\sum} h^{p,q}(Y).
\end{equation}
for all $i$. A straightforward calculation shows that this implies the invariance of all Hodge numbers 
except for $h^{1,0}$ and $h^{2,1}$, about which we only get that 
$h^{1,0}+ h^{2,1}$ is invariant. We then apply Corollary \ref{irregularity}.
\end{proof}

Corollary \ref{threefolds} is already known (in arbitrary dimension) for varieties of general
type: for these derived equivalence implies $K$-equivalence by a result of Kawamata
\cite{kawamata}, while $K$-equivalent varieties have the same Hodge numbers according to 
Batyrev \cite{batyrev} and Kontsevich, Denef-Loeser \cite{dl}. It is also well known for Calabi-Yau threefolds; 
more generally it follows easily for threefolds with numerically trivial canonical 
bundle (condition which is preserved by derived equivalence, see \cite{kawamata} Theorem 1.4). 
Indeed, since for threefolds Hirzebruch-Riemann-Roch gives $\chi (\omega_X) = \frac{1}{24} c_1 (X) c_2(X)$,  in this case $\chi (\omega_X)= 0$, hence $h^{1,0}(X)$ can be expressed in terms of 
Hodge numbers that are known to be derived invariant as above. Finally, in general the invariance of $h^{1,0}$ would follow automatically if $X$ and $Y$ were birational, but derived equivalence does not necessarily imply birationality. 

The proof of Theorem \ref{isogeny} in \S3 uses a number of standard facts in the study of derived
equivalences: invariance results and techniques due to Orlov and Rouquier, Mukai's description of semi-homogeneous vector bundles, and Orlov's fundamental characterization of derived equivalences. 
The main new ingredients are results of Nishi-Matsumura and Brion on actions of non-affine algebraic groups (see \S2). 
Further numerical applications of Corollary \ref{irregularity} to fourfolds or abelian varieties are provided in Remark \ref{num_ap}.

Finally, the case of abelian varieties shows the existence of Fourier-Mukai partners with non-isomorphic Picard varieties. We expect however the following stronger form of 
Theorem \ref{isogeny}(1).

\noindent
{\bf  Conjecture.} 
\emph{If $\D(X) \simeq \D(Y)$, then $\D({\rm Pic}^0 (X)) \simeq \D({\rm Pic}^0 (Y))$.}

\noindent
Derived equivalent curves must be isomorphic (see e.g. \cite{huybrechts}, Corollary 5.46), while in the case of surfaces the 
conjecture is checked in the upcoming thesis of Pham \cite{pham} using the present methods and 
the classification of Fourier-Mukai equivalences in \cite{bm} and \cite{kawamata}.

\noindent
{\bf Acknowledgements.} We thank A. C\u ald\u araru, L. Ein, D. Huybrechts and M. Musta\c t\u a for useful comments, and a 
referee for suggesting improvements to the exposition.

\section{Actions of non-affine algebraic groups}

Most of the results in this section can be found in Brion \cite{brion1}, \cite{brion2}, or are at least implicit there.
Let $G$ be a connected algebraic group. According to Chevalley's theorem (see e.g. \cite{brion1} p.1), 
$G$ has a unique maximal connected affine subgroup $\Aff(G)$, and the quotient $G / \Aff(G)$ is an
abelian variety. We denote this abelian variety by $\Alb(G)$, since the map $G \to
\Alb(G)$ is the Albanese map of $G$, i.e.\@ the universal morphism to an abelian variety (see \cite{serre2}). Thus $G \to \Alb(G)$ is a homogeneous fiber bundle with fiber $\Aff(G)$. 

\begin{lemma}[\cite{brion2}, Lemma 2.2]\label{loc_triv}
The map $G \to \Alb(G)$ is locally trivial in the Zariski topology.
\end{lemma}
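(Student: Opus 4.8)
The map $\pi \colon G \to \Alb(G) =: A$ is a surjective homomorphism of algebraic groups with kernel $\Aff(G) =: H$, so it is a torsor under $H$ for the action of right translation. Such a torsor is Zariski-locally trivial exactly when it admits Zariski-local sections: a section $s \colon U \to G$ over an open set $U \subseteq A$ produces an isomorphism $U \times H \cong \pi^{-1}(U)$ via $(u,h) \mapsto s(u)\cdot h$. So the plan is to produce sections of $\pi$ over an open cover of $A$.

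The first step is to reduce this to the existence of a single \emph{rational} section, using homogeneity. Since $\pi$ is a homomorphism, left translation $L_g \colon G \to G$ by an element $g$ satisfies $\pi \circ L_g = t_{\pi(g)} \circ \pi$, where $t_a$ denotes translation by $a$ on $A$; thus $L_g$ covers the translation $t_{\pi(g)}$ of the base. Consequently, if $s \colon U \to G$ is a section of $\pi$ over a dense open $U \subseteq A$ and $a \in A$ is arbitrary, then choosing any $g \in \pi^{-1}(a)$ and setting $s_a(x) = g \cdot s(x-a)$ defines a section of $\pi$ over the open set $a + U$. As $a$ varies the translates $a + U$ cover $A$, so local sections exist everywhere and $\pi$ is Zariski-locally trivial. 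It therefore suffices to find one rational section $A \dashrightarrow G$ of $\pi$.

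This last point is the crux, and here the hypothesis that $A$ is an abelian variety is essential: for a general affine quotient $G \to G/H$ the result fails, because $H = \Aff(G)$ need not be a special group in the sense of Serre. The plan is to extract the rational section from the anti-affine part of $G$. By Rosenlicht's decomposition, $G = \Aff(G) \cdot G_{\mathrm{ant}}$, where $G_{\mathrm{ant}}$ is the largest anti-affine subgroup; applying $\pi$ (which kills $\Aff(G) = \ker \pi$) shows that the restriction $\pi|_{G_{\mathrm{ant}}} \colon G_{\mathrm{ant}} \to A$ is surjective. Its kernel is $G_{\mathrm{ant}} \cap \Aff(G)$, a connected commutative affine group, hence isomorphic to $\mathbf{G}_m^{\,r} \times \mathbf{G}_a^{\,s}$ in characteristic zero. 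Thus $G_{\mathrm{ant}} \to A$ is a torsor under $\mathbf{G}_m^{\,r} \times \mathbf{G}_a^{\,s}$; restricting to the generic point of $A$ and invoking Hilbert's Theorem 90 together with the vanishing of $H^1$ of $\mathbf{G}_a$ over the function field $K$ of $A$, this torsor is generically trivial. It therefore admits a rational section $A \dashrightarrow G_{\mathrm{ant}} \subseteq G$, which composed with the inclusion is the desired rational section of $\pi$.

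The hard part is entirely concentrated in the third paragraph: locating the structural input that forces a rational section to exist even though $\Aff(G)$ may itself be non-special. The mechanism is that $G_{\mathrm{ant}}$ surjects onto $A$ with affine part a product of copies of $\mathbf{G}_m$ and $\mathbf{G}_a$ — which \emph{are} special — so a section can be threaded through $G_{\mathrm{ant}}$. Once such a rational section is in hand, the homogeneity argument of the second paragraph mechanically upgrades it to full Zariski-local triviality.
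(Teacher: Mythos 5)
The paper offers no proof of this lemma --- it is quoted directly from Brion \cite{brion2}, Lemma 2.2 --- so your argument can only be judged on its own terms. Your first two paragraphs are correct: Zariski-local triviality of the $\Aff(G)$-torsor $\pi \colon G \to \Alb(G)$ is equivalent to the existence of local sections, and translating a section defined on a dense open set covers all of $\Alb(G)$. The Rosenlicht decomposition $G = \Aff(G)\cdot G_{\mathrm{ant}}$ is also the right structural input.

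The gap is in the third paragraph, at the assertion that $N := G_{\mathrm{ant}} \cap \Aff(G) = \ker(\pi|_{G_{\mathrm{ant}}})$ is \emph{connected}. Its identity component is indeed $(G_{\mathrm{ant}})_{\mathrm{aff}} \cong \mathbf{G}_m^{\,r}\times\mathbf{G}_a^{\,s}$, but the component group $N/N^0$ is a finite subgroup of the abelian variety $\Alb(G_{\mathrm{ant}}) = G_{\mathrm{ant}}/N^0$ that can be nontrivial; in that case $\pi|_{G_{\mathrm{ant}}}$ factors through the nontrivial isogeny $\Alb(G_{\mathrm{ant}}) \to \Alb(G)$ and hence admits \emph{no} rational section at all, so no section of $\pi$ can be threaded through $G_{\mathrm{ant}}$. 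Concretely: take an anti-affine extension $1 \to \mathbf{G}_m \to S \to E \to 0$ of an elliptic curve, a $2$-torsion point $s \in S$ lying over a nonzero $f \in E[2]$ (it exists since $\mathrm{Ext}^1(\ZZ/2,\mathbf{G}_m)=0$), and set $G = (S\times\mathbf{G}_m)/\langle (s,-1)\rangle$. Then $G_{\mathrm{ant}} = S$, $\Aff(G)$ is the $2$-dimensional torus which is the image of $\mathbf{G}_m\times\mathbf{G}_m$, $\Alb(G) = E/\langle f\rangle$, and $N \cong \mathbf{G}_m\times\mu_2$ is disconnected, with $S \to E/\langle f\rangle$ admitting no rational section. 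The lemma still holds there ($\Aff(G)$ is a torus, so Hilbert 90 applies directly), which shows that a correct proof must use more of $G$ than $G_{\mathrm{ant}}$ alone. The standard repair is to use $G_{\mathrm{ant}} \subseteq G$ not as the carrier of a section but as a reduction of structure group: $G \cong G_{\mathrm{ant}}\times^N \Aff(G)$. Since $N$ is central in $G$, the semisimple part of $N$ lies in every maximal torus $T$ of $\Aff(G)$, so $N$ is contained in the connected commutative --- hence special --- subgroup $T\cdot N_u$ of $\Aff(G)$, where $N_u$ is the unipotent part of $N$. Pushing the $N$-torsor $G_{\mathrm{ant}} \to \Alb(G)$ out to this special subgroup gives a Zariski-locally trivial torsor, and the further extension of structure group to $\Aff(G)$ recovers $G \to \Alb(G)$, which is therefore Zariski-locally trivial as claimed.
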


Now let $X$ be a smooth projective variety. We abbreviate $G_X := \Aut^0(X)$, and let $a(X)$
be the dimension of the abelian variety $\Alb(G_X)$. The group $G_X$ naturally acts
on the Albanese variety $\Alb(X)$ as well (see \cite{brion1} \S3).

\begin{lemma}\label{action}
The action of $G_X$ on $\Alb(X)$ induces a map of abelian varieties 
$$\Alb(G_X) \to \Alb(X),$$
whose image is contained in the Albanese image ${\rm alb}_X (X)$. 
More precisely, the composition $G_X \to \Alb(X)$ is given by the formula $g
\mapsto \alb_X (g x_0 - x_0)$, where $x_0 \in X$ is an arbitrary point.
\end{lemma}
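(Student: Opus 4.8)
The plan is to fix a base point $x_0 \in X$ normalized so that $\alb_X(x_0) = 0$, and to extract everything in the statement from the single morphism $\phi \colon G_X \to \Alb(X)$ defined by $\phi(g) = \alb_X(g \cdot x_0)$. The claim will be that $\phi$ is a homomorphism of algebraic groups realizing the $G_X$-action on $\Alb(X)$ as translation, and that it descends to the asserted map $\Alb(G_X) \to \Alb(X)$.

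First I would record how the Albanese map transforms under an automorphism $g \in G_X$. Applying the universal property of the Albanese to $\alb_X \circ g \colon X \to \Alb(X)$ produces a unique endomorphism $\rho(g)$ of $\Alb(X)$ with
\[
\alb_X(g x) = \rho(g)\,\alb_X(x) + \alb_X(g x_0) \quad \text{for all } x \in X.
\]
Comparing the two ways of expanding $\alb_X\bigl((gh)x\bigr)$ shows $\rho(gh) = \rho(g)\rho(h)$, so $\rho$ is a homomorphism from $G_X$ into $\Aut(\Alb(X))$, the group of automorphisms of the abelian variety $\Alb(X)$ fixing the origin.

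The key step, and the main obstacle, is to prove that $\rho(g) = \id$ for every $g \in G_X$. Here $\rho$ is algebraic and $\Aut(\Alb(X))$ is a discrete group, since it acts faithfully on the lattice $H_1(\Alb(X), \ZZ)$; hence the connectedness of $G_X = \Aut^0(X)$ forces $\rho$ to be constant, equal to $\rho(e) = \id$. This is the step where connectedness is indispensable—for a disconnected automorphism group the linear part $\rho$ need not be trivial. Granting it, the displayed identity becomes $\alb_X(gx) = \alb_X(x) + \phi(g)$, which exhibits the action of $G_X$ on $\Alb(X)$ as translation by $\phi(g)$; and setting $x = h x_0$ gives $\phi(gh) = \phi(g) + \phi(h)$, so $\phi$ is a homomorphism of algebraic groups.

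It then remains to factor $\phi$ and to identify its image, both of which are formal. Since $\Aff(G_X)$ is connected and affine, its image under $\phi$ is a connected affine subgroup of the abelian variety $\Alb(X)$, hence trivial; thus $\phi$ factors through $G_X / \Aff(G_X) = \Alb(G_X)$, yielding a homomorphism of abelian varieties $\bar\phi \colon \Alb(G_X) \to \Alb(X)$. Its image equals $\im \phi = \alb_X(G_X \cdot x_0)$, and since the orbit $G_X \cdot x_0$ lies in $X$ this is contained in the Albanese image $\alb_X(X)$. Finally, writing $\phi(g) = \alb_X(g x_0) = \alb_X(g x_0) - \alb_X(x_0)$ and reading the right-hand side as the image under $\alb_X$ of the degree-zero $0$-cycle $g x_0 - x_0$ gives the base-point-independent formula $\phi(g) = \alb_X(g x_0 - x_0)$ of the statement.
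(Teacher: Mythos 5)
Your proof is correct and follows essentially the same route as the paper: both transfer $g$ to an affine automorphism of $\Alb(X)$ via the universal property, identify it as translation by $\alb_X(gx_0 - x_0)$, and factor the resulting homomorphism through $\Alb(G_X)$ using Chevalley's theorem. The only real difference is that you spell out why the linear part $\rho(g)$ is trivial (connectedness of $G_X$ versus discreteness of the origin-fixing automorphism group), a point the paper leaves implicit by simply asserting $\tilde{g} \in \Aut^0(\Alb(X))$.
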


\begin{proof}
From $G_X \times X \to X$, we obtain a map of abelian varieties 
\[
	\Alb(G_X) \times \Alb(X) \simeq \Alb(G_X \times X) \to \Alb(X).
\]
It is clearly the identity on $\Alb(X)$, and therefore given by a map of abelian
varieties $\Alb(G_X) \to \Alb(X)$. To see what it is, fix a base-point $x_0 \in X$, and
write the Albanese map of $X$ in the form $X \to \Alb(X)$, $x \mapsto \alb_X(x - x_0)$.
Let $g \in G_X$ be an automorphism of $X$. By the universal property of $\Alb(X)$, it
induces an automorphism $\tilde{g} \in \Aut^0 \bigl( \Alb(X) \bigr)$, making the
diagram
\begin{diagram}[width=3em]
X &\rTo^g& X \\
\dTo && \dTo \\
\Alb(X) &\rTo^{\tilde{g}}& \Alb(X)
\end{diagram}
commute; in other words, $\tilde{g} \bigl( \alb_X(x - x_0) \bigr) = \alb_X(gx - x_0)$.
Any such automorphism is translation by an element of $\Alb(X)$, and the formula
shows that this element has to be $\alb_X(g x_0 - x_0)$. It follows that the map $G_X \to
\Alb(X)$ is given by $g \mapsto \alb_X(g x_0 - x_0)$. By Chevalley's theorem, it
factors through $\Alb(G_X)$.
\end{proof}

\noindent
A crucial fact is the following theorem of  Nishi and Matsumura (cf. also \cite{brion1}).

\begin{theorem}[\cite{matsumura}, Theorem 2]\label{nishi}
The map $\Alb(G_X) \to \Alb(X)$ has finite kernel. More generally, any connected algebraic group 
$G$ of automorphisms of $X$ acts on  $\Alb(X)$ by translations, and the kernel of the induced 
homomorphism $G \rightarrow \Alb(X)$ is affine.
\end{theorem}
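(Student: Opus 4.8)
The plan is to prove the second, general assertion — that $N:=\ker(G\to\Alb(X))$ is affine — from which the first will follow immediately. Set $A:=\Alb(X)$, and fix a very ample line bundle $\Theta$ on $X$. As in Lemma~\ref{action}, functoriality of the Albanese produces the translation homomorphism $\phi\colon G\to A$, $\phi(g)=\alb_X(gx_0-x_0)$; since $A$ is an abelian variety, $\phi$ kills $\Aff(G)$ and factors through $\Alb(G)$. Concretely $N=\ker\phi$ is the group of automorphisms $g$ with $\alb_X\circ g=\alb_X$, i.e.\ those lying over the identity of $A$. Alongside $\phi$ I would consider the second homomorphism $c\colon G\to\Pic(X)$, $c(g)=g^{\ast}\Theta\otimes\Theta^{-1}$. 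Because $G$ is connected, $c$ takes values in $\Pic^0(X)=\alb_X^{\ast}\Pic^0(A)$, and its kernel $\ker c=\{\,g:g^{\ast}\Theta\cong\Theta\,\}$ is affine, since an automorphism fixing the class of $\Theta$ is linearized on the projective embedding $X\subseteq\PP\bigl(H^0(X,\Theta)^{\ast}\bigr)$ and hence embeds into $\mathrm{PGL}\bigl(H^0(X,\Theta)\bigr)$. The entire proof then reduces to showing that $N^0\subseteq\ker c$.

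The key computation, which I expect to carry the real content, compares $c(g)$ with the action of $g$ relative to $\alb_X$. Let $g\in N$. On one hand, $c(g)\in\Pic^0(X)$ means $c(g)=\alb_X^{\ast}N_g$ for a unique $N_g\in\Pic^0(A)$. On the other hand, since $g$ is an automorphism over $A$ one has $\alb_X\circ g^{-1}=\alb_X$, and therefore a canonical isomorphism $(\alb_X)_{\ast}(g^{\ast}\Theta)\cong(\alb_X)_{\ast}\Theta$. Writing $g^{\ast}\Theta=\Theta\otimes\alb_X^{\ast}N_g$ and using the projection formula, this becomes $\EE\cong\EE\otimes N_g$, where $\EE:=(\alb_X)_{\ast}\Theta$ is a nonzero coherent sheaf (it has the sections of $\Theta$) of some generic rank $r\ge1$. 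Comparing determinants — which are defined globally because $A$ is smooth — yields $N_g^{\otimes r}\cong\OO_A$, so $N_g$ is $r$-torsion. Consequently $c(N)$ lies in the finite $r$-torsion subgroup of $\Pic^0(X)$; as $N^0$ is connected, $c|_{N^0}$ is constant, that is $N^0\subseteq\ker c$.

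Granting this, $N^0$ is affine, hence so is $N=\ker(G\to\Alb(X))$, which is the general statement. To recover the first statement, take $G=G_X$: since $\phi$ factors through $\Alb(G_X)$ we have $\Aff(G_X)\subseteq N$, so $N/\Aff(G_X)=\ker\bigl(\Alb(G_X)\to A\bigr)$ is at once a subgroup of the abelian variety $\Alb(G_X)$ and a quotient of the affine group $N$; being both affine and a subgroup of an abelian variety, it is finite. Thus $\Alb(G_X)\to\Alb(X)$ has finite kernel.

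The step I expect to be the main obstacle is the torsion conclusion $N_g^{\otimes r}\cong\OO_A$. It rests on using the hypothesis $g\in N$ in two guises at once — that $c(g)$ is pulled back from $A$ (which needs only connectedness of $G$ together with $\Pic^0(X)=\alb_X^{\ast}\Pic^0(A)$) and that $g$ is an honest automorphism over $A$ (so that $(\alb_X)_{\ast}g^{\ast}\Theta\cong(\alb_X)_{\ast}\Theta$) — and then extracting torsion from $\EE\cong\EE\otimes N_g$. The only delicate point is that $\EE$ need not be locally free; I would handle this by taking the determinant of $\EE$ as a coherent sheaf on the smooth variety $A$, for which $\det(\EE\otimes N_g)=\det\EE\otimes N_g^{\otimes r}$ holds with $r$ the generic rank, so that no restriction to the flat locus of $\alb_X$ is needed.
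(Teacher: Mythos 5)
Your statement is one the paper does not prove at all: it is quoted from Matsumura \cite{matsumura} (see also Brion \cite{brion1}), so your attempt has to be judged on its own terms rather than against a proof in the text. The architecture of your argument is sound: reducing affineness of $N=\Ker(G\to\Alb(X))$ to the claim that $N^0$ fixes the isomorphism class of a very ample $\Theta$ (so that $N^0$ embeds in $\mathrm{PGL}\bigl(H^0(X,\Theta)\bigr)$), the identity $(\alb_X)_*(g^*\Theta)\cong(\alb_X)_*\Theta$ for $g\in N$, and the deduction of finiteness of $\Ker\bigl(\Alb(G_X)\to\Alb(X)\bigr)$ from the general statement are all correct.

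The gap is exactly at the step you flagged, and your proposed handling of it does not work. If $\alb_X$ is not surjective --- which happens whenever the Albanese image is a proper subvariety of $\Alb(X)$, e.g.\ $X=C\times E$ with $g(C)\ge 2$ and $E$ an elliptic curve acting by translation --- then $\EE=(\alb_X)_*\Theta$ is a torsion sheaf on $A=\Alb(X)$. The determinant of a coherent sheaf on a smooth projective variety, defined via a finite locally free resolution, satisfies $\det(\EE\otimes N)=\det\EE\otimes N^{\otimes r}$ with $r$ the rank of $\EE$ at the generic point of $A$, not at the generic point of its support; for a torsion sheaf $r=0$ and the relation $\EE\cong\EE\otimes N_g$ places no constraint on $N_g$. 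With $r$ read instead as the rank on the support, the formula is simply false: for a skyscraper $\OO_p$ on an elliptic curve one has $\OO_p\otimes N\cong\OO_p$ for every $N\in\Pic^0$, while $\det\OO_p\cong\OO(p)$ is unchanged. So the torsion conclusion $N_g^{\otimes r}\cong\OO_A$, hence $N^0\subseteq\Ker c$, is only established when $\alb_X$ is surjective. The argument is repairable, but not by staying on $A$: one should view $\EE$ as a torsion-free sheaf of rank $r\ge 1$ on the integral image $Z=\alb_X(X)$, pull the isomorphism back to the normalization $\widetilde{Z}$ (through which $\alb_X$ factors, $X$ being normal), take determinants of torsion-free quotients there to get $(N_g\vert_{\widetilde{Z}})^{\otimes r}\cong\OO_{\widetilde{Z}}$, and then use that $\Pic^0(A)\to\Pic(\widetilde{Z})$ is injective because $\alb_X^*\colon\Pic^0(A)\to\Pic^0(X)$ is an isomorphism factoring through it. Without some such modification, your proof covers only the case of surjective Albanese map.
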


Consequently, the image of $\Alb(G_X)$ is an abelian subvariety of $\Alb(X)$ of dimension $a(X)$.
This implies the inequality $a(X) \leq q(X)$. Brion observed that $X$ can always be
fibered over an abelian variety which is a quotient of $\Alb(G_X)$ of the same
dimension $a(X)$; the following proof is taken from \cite{brion1}, p.2 and \S3, and
is included for later use of its ingredients.

\begin{lemma}\label{fibration}
There is an affine subgroup $\Aff(G_X) \subseteq H \subseteq G_X$ with $H / \Aff(G_X)$
finite, such that $X$ admits a $G_X$-equivariant map $\psi \colon X \to G_X/H$.
Consequently, $X$ is isomorphic to the equivariant fiber bundle $G_X \times^H Z$
with fiber $Z = \psi^{-1}(0)$.
\end{lemma}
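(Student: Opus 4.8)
The plan is to build the fibration out of the homomorphism furnished by Lemma~\ref{action}. Let $u \colon X \to \Alb(X)$ denote the Albanese map normalized as $u(x) = \alb_X(x - x_0)$, and let $\rho \colon G_X \to \Alb(X)$, $\rho(g) = \alb_X(g x_0 - x_0)$, be the induced homomorphism. By Theorem~\ref{nishi} it has finite kernel on $\Alb(G_X)$, so its image is an abelian subvariety $B \subseteq \Alb(X)$ with $\dim B = a(X)$. The crucial point, already contained in the commuting square of Lemma~\ref{action}, is that $u$ is $G_X$-equivariant: $G_X$ acts on $\Alb(X)$ as translation through $\rho$, and $u(gx) = u(x) + \rho(g)$ for all $g \in G_X$ and $x \in X$.

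The difficulty is that $u$ records the full $q(X)$-dimensional variety $\Alb(X)$, whereas the statement asks for an equivariant map onto the $a(X)$-dimensional piece governed by $B$. To extract it, I would invoke Poincar\'e reducibility to choose an abelian subvariety $C \subseteq \Alb(X)$ with $B + C = \Alb(X)$ and $B \cap C$ finite, and set $p \colon \Alb(X) \to B'' := \Alb(X)/C$, so that $p|_B$ is an isogeny and $\dim B'' = a(X)$. Then $\psi := p \circ u \colon X \to B''$ is the desired map: the composite homomorphism $p \circ \rho \colon G_X \to B''$ is surjective with image $p(B) = B''$, so $H := \ker(p \circ \rho) = \rho^{-1}(C)$ gives an identification $G_X/H \cong B''$. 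Here $\ker \rho$ is affine and contains $\Aff(G_X)$ with finite index by Theorem~\ref{nishi}, while $H/\ker\rho \cong B \cap C$ is finite; hence $H$ is affine with $H/\Aff(G_X)$ finite, as required. Transporting the equivariance of $u$ to $\psi$ yields $\psi(gx) = \psi(x) + (p\circ\rho)(g)$, which is exactly $G_X$-equivariance for the left-translation action on $G_X/H \cong B''$.

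For the last assertion, the equivariant map $\psi$ to the homogeneous space $G_X/H$ produces the bundle description by the standard reconstruction: the fiber $Z := \psi^{-1}(0) = u^{-1}(C)$ over the base point is $H$-stable (since $\rho(H) \subseteq C$), and $(g,z) \mapsto gz$ descends to an isomorphism $G_X \times^H Z \xrightarrow{\sim} X$ (cf.\ \cite{brion1}). I expect the one genuinely delicate point to be the passage from $B$ to the isogenous quotient $B''$: a literal projection of $\Alb(X)$ onto the subvariety $B$ exists only up to isogeny, and it is precisely this that obstructs taking $H = \Aff(G_X)$ and forces the finite group $H/\Aff(G_X)$ into the statement. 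The remaining verifications — equivariance of $\psi$, affineness and finite index of $H$, and $H$-stability of $Z$ — are formal once the normalization $u(x) = \alb_X(x-x_0)$ is fixed.
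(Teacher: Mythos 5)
Your proof is correct and follows essentially the same route as the paper: both split the homomorphism $\Alb(G_X)\to\Alb(X)$ up to isogeny via Poincar\'e reducibility, take $\psi$ to be the Albanese map composed with the projection killing the complement, and read off the bundle structure from equivariance over the homogeneous space $G_X/H$. Your version merely makes explicit the identifications ($H=\rho^{-1}(C)$, $G_X/H\cong\Alb(X)/C$) that the paper leaves implicit.
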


\begin{proof}
By the Poincar\'e complete reducibility theorem, the map $\Alb(G_X) \to \Alb(X)$
splits up to isogeny. This means that we can find a subgroup $H$ containing
$\Aff(G_X)$, such that there is a surjective map $\Alb(X) \to G_X / H$ with
$\Alb(G_X) \to G_X/H$ an isogeny. It follows that $H / \Aff(G_X)$ is finite, and hence that $H$
is an affine subgroup of $G_X$ whose identity component is $\Aff(G_X)$. Let $\psi \colon
X \to G_X/H$ be the resulting map; it is equivariant by construction. Since $G_X$ acts
transitively on $G_X/H$, we conclude that $\psi$ is an equivariant fiber bundle over
$G_X/H$ with fiber $Z = \psi^{-1}(0)$, and therefore isomorphic to
\[
	G_X \times^H Z = (G_X \times Z) / H,
\]
where $H$ acts on the product by $(g, z) \cdot h = (g\cdot h, h^{-1}\cdot z)$.
\end{proof}

Note that the group $H$ naturally acts on $Z$; the proof shows that we obtain $X$
from the principal $H$-bundle $G_X \to G_X/H$ by replacing the fiber $H$ by $Z$ (see
\cite{serre1}, \S3.2).  While $X \to G_X/H$ is not necessarily locally trivial,
it is so in the \'etale topology.

\begin{lemma}\label{isotrivial}
Both $G_X \to G_X/H$ and $X \to G_X/H$ are \'etale locally trivial.
\end{lemma}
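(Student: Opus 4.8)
The plan is to reduce both assertions to Lemma \ref{loc_triv} by exploiting the isogeny produced in Lemma \ref{fibration}. Write $F = H/\Aff(G_X)$, a finite group, so that $G_X/H = \Alb(G_X)/F$ and the natural map $q \colon \Alb(G_X) \to G_X/H$ is exactly the isogeny appearing in the proof of Lemma \ref{fibration}. Since we work over $\CC$, this $q$ is a finite, surjective, \'etale morphism, and it is the \'etale cover over which everything will trivialize.

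First I would treat $G_X \to G_X/H$, which is a principal $H$-bundle. The key observation is that a principal bundle $P \to B$ is \'etale locally trivial as soon as its pullback along some surjective \'etale map $B' \to B$ becomes Zariski locally trivial: Zariski-local sections over an open cover of the pullback give, by composition, an \'etale cover of $B$ over which $P$ is trivial. I would therefore pull $G_X \to G_X/H$ back along $q$ and analyze the second projection $G_X \times_{G_X/H} \Alb(G_X) \to \Alb(G_X)$. A direct computation identifies this fiber product with $G_X \times F$, the projection sending $(g,f)$ to $p(g)\cdot f$, where $p \colon G_X \to \Alb(G_X)$ is the Albanese map of $G_X$ and $\cdot f$ denotes translation on $\Alb(G_X)$. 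Each of the finitely many maps $g \mapsto p(g)\cdot f$ is the composite of $p$ with a translation automorphism, hence Zariski locally trivial with fiber $\Aff(G_X)$ by Lemma \ref{loc_triv}; intersecting trivializing opens over all $f \in F$ shows that $G_X \times F \to \Alb(G_X)$ is Zariski locally trivial with fiber $\Aff(G_X) \times F \cong H$. This yields the \'etale local triviality of $G_X \to G_X/H$.

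For $X \to G_X/H$ I would invoke the description $X \cong G_X \times^H Z$ from Lemma \ref{fibration} together with the fact that the associated-bundle construction commutes with base change. Over any \'etale $V \to G_X/H$ that trivializes the principal bundle, so that $G_X \times_{G_X/H} V \cong V \times H$ as $H$-bundles, one gets $X \times_{G_X/H} V \cong (G_X \times_{G_X/H} V) \times^H Z \cong (V \times H) \times^H Z \cong V \times Z$. Thus \'etale local triviality passes from the principal bundle to the associated bundle, now with fiber $Z$.

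The point requiring care is that $H$ is affine but \emph{disconnected}, its group of components being $F$, so one cannot simply cite a local-triviality statement for quotients by connected groups (and indeed, as the preceding remark notes, Zariski local triviality genuinely fails). The finite \'etale cover $q$ is introduced precisely to absorb this disconnectedness and funnel the problem into the connected-fiber situation governed by Lemma \ref{loc_triv}. The one computational step to verify carefully is the identification $G_X \times_{G_X/H} \Alb(G_X) \cong G_X \times F$ with projection $(g,f)\mapsto p(g)\cdot f$, since it is this translation formula that lets the translation-invariance of Zariski local triviality do the work; everything else is formal.
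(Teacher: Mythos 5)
Your proof is correct and follows essentially the same route as the paper: pull back along the finite \'etale map $\Alb(G_X) \to G_X/H$ and reduce to the Zariski local triviality of $G_X \to \Alb(G_X)$ from Lemma \ref{loc_triv}, using that $X$ is the bundle associated to the principal bundle. You simply supply more detail than the paper does (the identification of the fiber product with $G_X \times F$ and the base-change compatibility of the associated-bundle construction), and both computations check out.
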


\begin{proof}
Consider the pullback of $X$ along the \'etale map $\Alb(G_X) \to G_X/H$,
\begin{diagram}[width=3em]
X' &\rTo& X \\
\dTo && \dTo \\
\Alb(G_X) &\rTo& G_X/H.
\end{diagram}
One notes that $X' \to \Alb(G_X)$ is associated to the principal bundle
$G_X \to \Alb(G_X)$. The latter is locally trivial in the Zariski topology by Lemma \ref{loc_triv}.
\end{proof}

\begin{corollary}\label{chi}
If $a(X) > 0$ (i.e.\@ $G_X$ is not affine), then $\chi(\OO_X) = 0$.
\end{corollary}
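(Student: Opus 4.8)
The plan is to exploit the fibration structure established in the preceding lemmas. By Lemma \ref{fibration}, the hypothesis $a(X) > 0$ means $G_X/H$ is a positive-dimensional abelian variety (it is isogenous to $\Alb(G_X)$), and $X$ is realized as the equivariant fiber bundle $G_X \times^H Z \to G_X/H$ with fiber $Z$. The key idea I would pursue is that the Euler characteristic is multiplicative in fibrations, so that $\chi(\OO_X)$ should factor through $\chi(\OO_{G_X/H})$, which vanishes because $G_X/H$ is a positive-dimensional abelian variety.

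First I would pass to the \'etale-local picture provided by Lemma \ref{isotrivial}. Since $X \to G_X/H$ is \'etale locally trivial with fiber $Z$, I would like to compute the pushforward $\RR\pi_* \OO_X$, where $\pi \colon X \to G_X/H$. \'Etale-local triviality, together with flatness of $\pi$, should give that the higher direct images $R^i \pi_* \OO_X$ are locally free sheaves on $G_X/H$ whose formation commutes with base change, with fibers $H^i(Z, \OO_Z)$. I would then want to argue that these locally free sheaves are in fact \emph{translation-invariant}, hence numerically trivial on the abelian variety $G_X/H$: this is where the $G_X$-equivariance of the whole construction is essential. Because $G_X$ acts on $X$ covering its translation action on $G_X/H$, the sheaves $R^i\pi_*\OO_X$ carry a compatible equivariant structure, and a $G_X$-equivariant (in particular translation-invariant under $\Alb(G_X) \to G_X/H$) locally free sheaf on an abelian variety has vanishing Chern classes.

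With $R^i\pi_*\OO_X$ numerically trivial, the Leray spectral sequence combined with the projection-type computation gives
\[
\chi(\OO_X) \;=\; \sum_i (-1)^i \chi\bigl(G_X/H,\, R^i\pi_*\OO_X\bigr).
\]
Each term $\chi(G_X/H, R^i\pi_*\OO_X)$ is the Euler characteristic of a numerically trivial bundle on an abelian variety, and by Riemann--Roch on an abelian variety this equals $\rk(R^i\pi_*\OO_X)\cdot \chi(\OO_{G_X/H})$; since $G_X/H$ is a positive-dimensional abelian variety, $\chi(\OO_{G_X/H}) = 0$, forcing $\chi(\OO_X) = 0$.

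The main obstacle I anticipate is the step asserting that $R^i\pi_*\OO_X$ is numerically trivial. The \'etale-local triviality alone only makes these sheaves locally free; the vanishing of their Chern classes genuinely requires the equivariance, and one must argue carefully that the $G_X$-action descends to an honest translation-invariance statement on $G_X/H$ (via the isogeny $\Alb(G_X) \to G_X/H$ and the diagram in Lemma \ref{isotrivial}). Once numerical triviality is in hand, the Riemann--Roch computation on the abelian variety is routine. An alternative route avoiding the sheaf-theoretic bookkeeping would be to pull everything back along the isogeny $\Alb(G_X) \to G_X/H$, where by Lemma \ref{loc_triv} the bundle $G_X \to \Alb(G_X)$ is Zariski-locally trivial; multiplicativity of $\chi$ under the resulting fibration and the vanishing of $\chi(\OO_{\Alb(G_X)})$ would then conclude, at the cost of tracking how $\chi$ behaves under the isogeny.
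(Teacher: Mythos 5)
Your proof is correct, and it rests on the same underlying idea as the paper's: $X$ fibers over a positive-dimensional abelian variety isogenous to $\Alb(G_X)$, and this forces $\chi(\OO_X)=0$. The paper's own proof is much terser and follows exactly the "alternative route" you sketch at the end: it passes to the pullback $X' \to \Alb(G_X)$ of Lemma \ref{isotrivial}, which is Zariski-locally trivial with fiber $Z$, asserts that $\chi(\OO_{X'})=0$ is clear, and concludes from $\chi(\OO_{X'}) = \deg(X'/X)\cdot \chi(\OO_X)$. Your main line of argument instead works directly with $\pi\colon X \to G_X/H$ (note $G_X/H$ is itself an abelian variety, being a finite quotient of $\Alb(G_X)$), using flatness and constancy of the fibers to get local freeness of $R^i\pi_*\OO_X$, the $G_X$-equivariance of $\psi$ to get translation-invariance, and the structure of homogeneous bundles on abelian varieties to get $\chi(G_X/H, R^i\pi_*\OO_X)=0$; this actually supplies the justification that the paper compresses into the word "clearly." You are also right to flag that local freeness alone is not enough --- $\chi$ of an arbitrary locally free sheaf on an abelian variety need not vanish --- so the homogeneity step is genuinely needed in either formulation; your equivariance argument (every translation of $G_X/H$ lifts to an automorphism of $X$ over it) closes that gap cleanly.
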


\begin{proof}
Clearly $\chi(\OO_{X'}) = 0$ since $X'$ is locally isomorphic to the product of
$Z$ and $\Alb(G_X)$. But $\chi(\OO_{X'}) = \deg(X'/X) \cdot \chi(\OO_X)$.
\end{proof}

\section{Proof of the main result}

Let $\Phi \colon \D(X) \to \D(Y)$ be an exact equivalence between the derived
categories of two smooth projective varieties $X$ and $Y$. By Orlov's 
criterion, $F$ is uniquely up to isomorphism a Fourier-Mukai functor, i.e.\@ 
$\Phi \simeq \FM{\EE}$ with $\EE \in \D(X\times Y)$, where 
$\FM{\EE} (\cdot) = {p_Y}_* (p_X^* (\cdot) \otimes \EE)$. (Here and in what follows 
all functors are derived.) A result of Rouquier, \cite{rouquier}
Th\'eor\'eme~4.18 (see also \cite{huybrechts}, Proposition 9.45), says that $\Phi$ induces an
isomorphism of algebraic
groups\footnote{Note that in the quoted references the result is stated for the
semidirect product of $\Pic^0 (X)$ and $\Aut^0 (X)$.  One can however check 
that the action of $\Aut^0(X)$ on $\Pic^0(X)$ is trivial. Indeed, $\Aut^0(X)$ acts on $\Pic^0 (X)$ 
by elements in $\Aut^0(\Pic^0(X))$, which are translations. Since the origin is fixed, these
must be trivial. This shows in particular that $\Aut^0(X)$ and $\Pic^0(X)$ commute as subgroups of 
$\Aut (\D(X))$.}
\begin{equation}\label{rouquier}
	F \colon \Aut^0(X) \times \Pic^0(X) \simeq \Aut^0(Y) \times \Pic^0(Y)
\end{equation}
in the following manner: A pair of $\varphi \in \Aut(X)$ and $L \in \Pic(X)$ defines
an auto-equivalence of $\D(X)$ by the formula $\varphi_*\bigl( L \otimes (\cdot) \bigr)$; 
its kernel is $(\id, \varphi)_{\ast} L \in \D(X \times X)$. When $(\varphi, L) \in
\Aut^0(X) \times \Pic^0(X)$, Rouquier proves that the composition $\FM{\EE} \circ
\FM{(\id, \varphi)_{\ast} L} \circ \FM{\EE}^{-1}$ is again of the form $\FM{(\id,
\psi)_{\ast} M}$ for a unique pair $(\psi, M) \in \Aut^0(Y) \times \Pic^0(Y)$. We
then have $F(\varphi, L) = (\psi, M)$. The following interpretation in terms of the kernel $\EE$
was proved by Orlov (see \cite{orlov}, Corollary 5.1.10) for abelian varieties; the general 
case is similar, and we include it for the reader's convenience.

\begin{lemma} \label{lem:Rouquier}
One has $F(\varphi, L) = (\psi, M)$ if and only if 
\[
	p_1^{\ast} L \otimes (\varphi\times \id)^{\ast} \EE \simeq
	p_2^{\ast} M \otimes (\id \times \psi)_{\ast} \EE. 
\]
\end{lemma}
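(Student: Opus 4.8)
The plan is to translate the defining property of $F$ into an isomorphism of Fourier--Mukai functors $\D(X) \to \D(Y)$, and then to compare kernels. By definition, $F(\varphi, L) = (\psi, M)$ means that $\FM{\EE} \circ \FM{(\id,\varphi)_{\ast} L} \circ \FM{\EE}^{-1} \simeq \FM{(\id,\psi)_{\ast} M}$; composing on the right with $\FM{\EE}$, this is equivalent to
$$\FM{\EE} \circ \FM{(\id,\varphi)_{\ast} L} \simeq \FM{(\id,\psi)_{\ast} M} \circ \FM{\EE}.$$
Both sides are Fourier--Mukai functors with kernels on $X \times Y$, and since the kernel of a Fourier--Mukai functor is determined up to isomorphism by the functor (Orlov), the displayed isomorphism of functors holds if and only if the two kernels are isomorphic. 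So it suffices to compute each kernel and check that the resulting identity is exactly the formula in the statement.

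For the two kernels I would use the standard convolution formula together with base change and the projection formula. Recall that $\FM{(\id,\varphi)_{\ast}L}$ is the autoequivalence $\varphi_{\ast}\bigl(L \otimes (\cdot\,)\bigr)$ of $\D(X)$, and likewise $\FM{(\id,\psi)_{\ast}M} = \psi_{\ast}\bigl(M \otimes(\cdot\,)\bigr)$ on $\D(Y)$. First I would record four elementary composition identities. For $\varphi \in \Aut^0(X)$ one has $\FM{\EE} \circ \varphi_{\ast} \simeq \FM{(\varphi \times \id)^{\ast}\EE}$; for $\psi \in \Aut^0(Y)$ one has $\psi_{\ast} \circ \FM{\EE} \simeq \FM{(\id \times \psi)_{\ast}\EE}$; and for the twists, $\FM{\EE} \circ \bigl(L \otimes(\cdot\,)\bigr) \simeq \FM{p_1^{\ast}L \otimes \EE}$ and $\bigl(M\otimes(\cdot\,)\bigr) \circ \FM{\EE} \simeq \FM{p_2^{\ast}M \otimes \EE}$. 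Each of these follows by writing out the convolution of the two kernels on the relevant triple product, applying flat base change to the kernel supported on a graph, and then the projection formula; the point in each case is that the relevant composite projection on the triple product restricts to an isomorphism onto $X \times Y$, so the higher direct images collapse.

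Assembling these, the left-hand functor $\FM{\EE} \circ \varphi_{\ast} \circ \bigl(L \otimes(\cdot\,)\bigr)$ has kernel $p_1^{\ast}L \otimes (\varphi\times\id)^{\ast}\EE$, and the right-hand functor $\psi_{\ast}\circ \bigl(M \otimes(\cdot\,)\bigr) \circ \FM{\EE}$ has kernel $(\id\times\psi)_{\ast}\bigl(p_2^{\ast}M \otimes \EE\bigr)$. Since $\id \times \psi$ is an isomorphism its pushforward is monoidal, so this last kernel equals $(\id\times\psi)_{\ast}p_2^{\ast}M \otimes (\id\times\psi)_{\ast}\EE = p_2^{\ast}(\psi^{-1})^{\ast}M \otimes (\id\times\psi)_{\ast}\EE$. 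Here I would invoke the triviality of the $\Aut^0(Y)$-action on $\Pic^0(Y)$ noted before the lemma: because $\psi$ lies in the identity component, $(\psi^{-1})^{\ast}M \simeq M$, and the right-hand kernel simplifies to $p_2^{\ast}M \otimes (\id\times\psi)_{\ast}\EE$. Equating the two kernels gives exactly the asserted formula.

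I expect the only real bookkeeping to be in verifying the four composition identities --- in particular keeping track of which factor each projection lands on in the triple products, and checking that the composite maps are isomorphisms so that base change and the projection formula apply cleanly. The mild subtlety at the end is the identification $(\psi^{-1})^{\ast}M \simeq M$, which is precisely the point where the hypothesis $\psi \in \Aut^0(Y)$ (rather than an arbitrary automorphism) enters.
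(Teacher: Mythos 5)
Your proposal is correct and follows essentially the same route as the paper: reduce to an isomorphism of kernels via uniqueness of Fourier--Mukai kernels for equivalences, compute the two convolutions by base change and the projection formula along graph embeddings (sections of $p_{13}$), and invoke the triviality of the $\Aut^0(Y)$-action on $\Pic^0(Y)$ to identify $(\psi^{-1})^{\ast}M \simeq M$ at the end. The only difference is organizational --- you factor each side into an automorphism pushforward and a line-bundle twist rather than convolving $(\id,\varphi)_{\ast}L$ with $\EE$ in one step --- which changes nothing of substance.
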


\begin{proof}
By construction, $F(\varphi, L) = (\psi, M)$ is equivalent to the relation
$$\FM{\EE} \circ \FM{(\id, \varphi)_{\ast} L} = \FM{(\id, \psi)_{\ast} M} \circ \FM{\EE}.$$
Since both sides are equivalences, their kernels have to be isomorphic. Mukai's formula for the 
kernel of the composition of two integral functors (see \cite{huybrechts}, Proposition 5.10) gives
\begin{equation}\label{eq:rel}
{p_{13}}_* \bigl(p_{12}^* (\id , \varphi)_* L \otimes p_{23}^* \EE\bigr) \simeq
{p_{13}}_* \bigl(p_{12}^* \EE \otimes p_{23}^* (\id , \psi)_* M \bigr).
\end{equation}
To compute the left-hand side of \eqref{eq:rel}, let $\lambda \colon X \times Y \to X
\times X \times Y$ be given by $\lambda(x,y) = (x, \varphi(x), y)$, making the
following diagram commutative:
\begin{diagram}[l>=2em]
X \times Y &\rTo^{\lambda}& X \times X \times Y &\rTo^{p_{13}}& X \times Y \\
\dTo^{p_1} && \dTo^{p_{12}} \\
X &\rTo^{(\id,\varphi)}& X \times X
\end{diagram}
By the base-change formula, $p_{12}^* (\id,\varphi)_* L \simeq \lambda_* p_1^* L$; using the
projection formula and the identities $p_{13} \circ \lambda = \id$ and $p_{23} \circ
\lambda = \varphi \times \id$, we then have
\[
	{p_{13}}_*  \bigl( p_{12}^* (\id, \varphi)_* L \otimes p_{23}^* \EE \bigr) \simeq
		p_1^* L \otimes \lambda^* p_{23}^* \EE	 \simeq 
		p_1^* L \otimes (\varphi \times \id)^* \EE.
\]
To compute the right-hand side of \eqref{eq:rel}, we similarly define $\mu \colon X
\times Y \to X \times Y \times Y$ by the formula $\mu(x,y) = (x, y, \psi(y))$, to fit
into the diagram
\begin{diagram}[l>=2em]
X \times Y &\rTo^{\mu}& X \times Y \times Y &\rTo^{p_{13}}& X \times Y \\
\dTo^{p_2} && \dTo^{p_{23}} \\
Y &\rTo^{(\id,\psi)}& Y \times Y.
\end{diagram}
Since $p_{13} \circ \mu = (\id \times \psi)$ and $p_{12} \circ \mu = \id$, the same
calculation as above shows that
\[
	{p_{13}}_*  \bigl( p_{12}^* \EE \otimes p_{23}^* (\id, \psi)_* M \bigr) \simeq
	(\id \times \psi)_* \bigl( \EE \otimes p_2^* M \bigr) \simeq
	(\id \times \psi)_* \EE \otimes p_2^* M,
\]
where the last step uses that the action of $\Aut^0(Y)$ on $\Pic^0(Y)$ is trivial, 
so $(\id \times \psi)^* p_2^* M \simeq p_2^*M$.
\end{proof}

We now give the proof of Theorem \ref{isogeny}.  It is 
in fact more convenient to start directly with the numerical Corollary \ref{irregularity}.  
Note that Rouquier's result (or the invariance of the first Hochschild cohomology) implies the derived
invariance of the quantity $h^0 (X, \Omega_X^1) + h^0 (X, T_X)$.
Hence it suffices to show that $q(X) = q(Y)$, where we set $q(X) = h^0 (X, \Omega_X^1)$, 
and similarly for $Y$. 

We continue to write $G_X = \Aut^0(X)$ and $G_Y = \Aut^0(Y)$. Let $\EE$ be the
kernel defining the equivalence, and let $F \colon G_X \times \Pic^0(X) \to G_Y
\times \Pic^0(Y)$ be the isomorphism of algebraic groups 
from Rouquier's theorem, as above. To prove the assertion, we consider the map 
$$\beta \colon \Pic^0(X) \to G_Y, \quad \beta(L) = p_1 \bigl( F(\id, L) \bigr),$$ 
and let $B = \Im \beta$.
Similarly, we define 
$$\alpha \colon \Pic^0(Y) \to G_X, \quad \alpha(M) = p_1
\bigl( F^{-1}(\id, M) \bigr),$$ 
and let $A = \Im \alpha$. One easily verifies that $F$ induces an isomorphism
\[
	F \colon A \times \Pic^0(X) \to B \times \Pic^0(Y).
\]
If both $A$ and $B$ are trivial, we immediately obtain $\Pic^0(X) \simeq \Pic^0(Y)$.
Excluding this case from now on, we let the abelian variety $A \times B$ act on
$X \times Y$ by automorphisms.  Take a point $(x,y)$ in the support of the kernel
$\EE$, and consider the orbit map
\[
	f \colon A \times B \to X \times Y, \quad 	
		(\varphi, \psi) \mapsto \bigl( \varphi(x), \psi(y) \bigr).
\]
By Lemma \ref{action} and the Nishi-Matsumura Theorem \ref{nishi}, the induced map $A
\times B \to \Alb(X) \times \Alb(Y)$ has finite kernel. Consequently, the dual map
$f^*: \Pic^0(X) \times \Pic^0(Y) \to \widehat{A} \times \widehat{B}$ is surjective.

Now let $\FF := f^{\ast} \EE \in \D(A \times B)$; it is nontrivial by our choice of
$(x,y)$. For $F(\varphi, L) = (\psi, M)$, the formula in Lemma~\ref{lem:Rouquier} can be
rewritten in the more symmetric form (again using the fact that $\psi^* M \simeq M$):
\begin{equation} \label{eq:formula-EE}
	(\varphi \times \psi)^* \EE \simeq (L^{-1} \boxtimes  M) \otimes \EE.
\end{equation}
For $(\varphi, \psi) \in A \times B$, let $t_{(\varphi, \psi)} \in \Aut^0(A
\times B)$ denote translation by $(\varphi, \psi)$. The identity in
\eqref{eq:formula-EE} implies that
$t_{(\varphi, \psi)}^{\ast} \FF \simeq f^*(L^{-1} \boxtimes  M) \otimes \FF$, whenever
$F(\varphi, L) = (\psi, M)$. We introduce the map
\[
	\pi = (\pi_1, \pi_2) \colon 
		A \times \Pic^0(X) \to (A \times B) \times (\widehat{A} \times \widehat{B}), \quad
		\pi(\varphi, L) = \bigl( \varphi, \psi, L^{-1} \vert_A,  M \vert_B \bigr),
\]
where we write $L^{-1}\vert_A$ for the pull-back from $\Alb(X)$ to A, and same for $M$.
We can then write the identity above as
\begin{equation} \label{eq:formula-FF}
	t_{\pi_1(\varphi, L)}^* \FF \simeq \pi_2(\varphi, L) \otimes \FF.
\end{equation}

Since $\pi_1 \colon A \times \Pic^0(X) \to A \times B$ is surjective, it follows that
each cohomology object $H^i(\FF)$ is a semi-homogeneous vector bundle on $A \times
B$, and that $\dim(\Im \pi) \ge \dim A + \dim B$. On the other hand Mukai \cite{mukai}, Proposition~5.1,
shows that the semi-homogeneity of $H^i (\FF)$ is equivalent to the fact that the closed 
subset 
$$\Phi (H^i (\FF)) := \{ (x, \alpha) \in (A \times B) \times (\widehat{A} \times \widehat{B}) ~|~  
t_x^* H^i (\FF) \simeq H^i (\FF) \otimes \alpha \}$$ has dimension precisely $\dim A + \dim
B$. This implies that $\dim(\Im \pi) = \dim A + \dim
B$ (and in fact that $\Im \pi = \Phi^0 (H^i (\FF))$, the neutral component, for any $i$, though we will not use this; 
note that $\Phi$ is denoted $\Phi^0$, and $\Phi^0$ is denoted $\Phi^{00}$ in \cite{mukai}). Furthermore, we have
\begin{align*}
	\Ker(\pi) &= \menge{(\id, L) \in A \times \Pic^0(X)}%
		{\text{$F(\id, L) = (\id, M)$ and $L \vert_A \simeq \OO_A$ and $M \vert_B \simeq \OO_B$}} \\
	&\subseteq \menge{L \in \Pic^0(X)}{L \vert_A \simeq \OO_A} 
		= \Ker \bigl( \Pic^0(X) \to \widehat{A} \bigr).
\end{align*}
Now the surjectivity of $f^*$ implies in particular that the restriction map $\Pic^0(X) \to \widehat{A}$ is surjective, so we get $\dim(\Ker \pi) \leq q(X) - \dim A$, and therefore
\[
	\dim A + \dim B = \dim A + q(X) - \dim(\Ker \pi) \geq 2 \dim A.
\]
Thus $\dim A \leq \dim B$; by symmetry, $\dim A = \dim B$, and finally, $q(X) =
q(Y)$.  This concludes the proof of the fact that $\Pic^0 (X)$ and $\Pic^0 (Y)$ have
the same dimension. 

We now use this to show that they are in fact isogenous. Let $d = \dim A = \dim B$.
The reasoning above proves that $\Im \pi$ is an abelian subvariety of $(A \times B)
\times (\widehat{A} \times \widehat{B})$, with $\dim(\Im \pi) = 2d$. For dimension reasons, we also have
\begin{equation} \label{eq:neutral}
	(\Ker \pi)^0 \simeq \bigl( \Ker( \Pic^0(X) \to \widehat{A}) \bigr)^0 
	\simeq \bigl( \Ker( \Pic^0(Y) \to \widehat{B}) \bigr)^0,
\end{equation}
where the superscripts indicate neutral components. We claim that the projection $p
\colon \Im \pi \to A \times \widehat{A}$ is an isogeny (likewise for $B \times \widehat{B}$).
Indeed, a point in $p^{-1}(\id, \OO_A)$ is of the form
$\bigl( \id, \psi, \OO_A, M \vert_B \bigr)$, where $F(\id, L) =
(\psi, M)$ and $L \vert_A \simeq \OO_A$. By \eqref{eq:neutral}, a fixed multiple of
$(\id, L)$ belongs to $\Ker \pi$, and so $\Ker p$ is a finite set. It follows that
$\Im \pi$ is isogenous to both $A \times \widehat{A}$ and $B \times \widehat{B}$; consequently,
$A$ and $B$ are themselves isogenous.

To conclude the proof of part (1), note that we have extensions
\[
	0 \to \Ker \beta \to \Pic^0(X) \to B \to 0 \qquad \text{and} \qquad
	0 \to \Ker \alpha \to \Pic^0(Y) \to A \to 0.
\]
By definition, $\Ker \beta$ consists of those $L \in \Pic^0(X)$ for which $F(\id, L)
= (\id, M)$; obviously, $F$ now induces an isomorphism $\Ker \beta \simeq \Ker
\alpha$, and therefore $\Pic^0(X)$ and $\Pic^0(Y)$ are isogenous. Now by Rouquier's isomorphism (\ref{rouquier})
and the uniqueness of ${\rm Aff}(G)$ in Chevalley's theorem we have ${\rm
Aff}(G_X)\simeq {\rm Aff}(G_Y)$ and $$\Alb(G_X) \times \Pic^0(X) \simeq  \Alb(G_Y)
\times \Pic^0(Y).$$
Therefore we also have equivalently that $\Alb(G_X)$ and $\Alb(G_Y)$  are isogenous.

It remains to check part (2). Clearly $a(X) = a(Y)$.  If
$a(X) = 0$, we obviously have $\Pic^0(X) \simeq \Pic^0(Y)$. On the other hand, if
$a(X) >0$, Lemmas \ref{fibration} and \ref{isotrivial} show that $X$ can be written
as an \'etale locally trivial fiber bundle over a quotient of $\Alb(G_X)$ by a finite
subgroup, so an abelian variety isogenous to $\Alb(G_X)$. The same holds for $Y$ by symmetry. Note
that in this case we have $\chi (\OO_X) = \chi (\OO_Y) = 0$ by Corollary \ref{chi}.

\begin{remark}
Results of Mukai \cite{mukai}, \S5 and \S6, imply that each
$H^i(\FF)$ on $A \times B$ in the proof above has a filtration with simple semi-homogeneous quotients, 
all of the same slope, associated to the subvariety $\Im \pi$. In line with Orlov's 
work on derived equivalences of abelian varieties \cite{orlov} \S5, one may guess 
that these simple bundles induce derived equivalences between
$A$ and $B$, and that $\Im \pi$ induces an isomorphism between $A \times
\widehat{A}$ and $B \times \widehat{B}$, but we have not been able to prove this.
\end{remark}

\begin{remark}[{\bf Further numerical applications}]\label{num_ap}
In the case of fourfolds, in addition to the Hodge numbers that are equal due to the general invariance of Hochschild homology (namely $h^{3,0}$ and $h^{4,0}$), Corollary \ref{irregularity} implies:

\begin{corollary}\label{fourfolds}
Let $X$ and $Y$ be smooth projective fourfolds with $\D(X) \simeq \D(Y)$.  Then
$h^{2,1} (X) = h^{2,1} (Y)$. If in addition $\Aut^0(X)$ is not affine, then $h^{2,0} (X) = h^{2,0} (Y)$ 
and $h^{3,1}(X) = h^{3,1} (Y)$.
\end{corollary}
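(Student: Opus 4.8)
The plan is to combine the derived invariance of Hochschild homology with the constraints coming from Serre duality, complex conjugation, and Corollary \ref{irregularity}, exactly in the spirit of the proof of Corollary \ref{threefolds}. As recalled there, Hochschild homology invariance yields
\[
\sum_{p-q=i} h^{p,q}(X) = \sum_{p-q=i} h^{p,q}(Y) \quad \text{for all } i,
\]
and the relations $h^{p,q} = h^{q,p}$ and $h^{p,q} = h^{4-p,4-q}$ let me rewrite each diagonal sum on a fourfold in terms of a few independent Hodge numbers. First I would record the consequences of the diagonals $i=1$ and $i=2$: using $h^{4,3}=h^{1,0}$ and $h^{3,2}=h^{2,1}$, the diagonal $i=1$ collapses to the invariance of $h^{1,0}+h^{2,1}$, while using $h^{4,2}=h^{2,0}$, the diagonal $i=2$ collapses to the invariance of $2h^{2,0}+h^{3,1}$.

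For the first assertion I would simply feed Corollary \ref{irregularity} into the $i=1$ relation: since $h^{1,0}(X)=h^{1,0}(Y)$ and $h^{1,0}+h^{2,1}$ is invariant, subtraction gives $h^{2,1}(X)=h^{2,1}(Y)$ unconditionally.

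For the conditional assertion the extra input is the hypothesis that $\Aut^0(X)$ is not affine, i.e.\ $a(X)>0$. Since $a(X)=a(Y)$, both groups are non-affine, so Corollary \ref{chi} gives $\chi(\OO_X)=\chi(\OO_Y)=0$. I would then expand
\[
\chi(\OO_X) = \sum_{j=0}^{4} (-1)^j h^{0,j}(X) = 1 - h^{1,0} + h^{2,0} - h^{3,0} + h^{4,0},
\]
using $h^{0,j}=h^{j,0}$. Setting this equal to zero expresses $h^{2,0}$ in terms of $h^{1,0}, h^{3,0}, h^{4,0}$, all of which are already derived invariant ($h^{1,0}$ by Corollary \ref{irregularity}, and $h^{3,0}, h^{4,0}$ by the invariance of Hochschild homology as noted in the remark). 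Hence $h^{2,0}(X)=h^{2,0}(Y)$, and feeding this into the invariance of $2h^{2,0}+h^{3,1}$ from the $i=2$ diagonal yields $h^{3,1}(X)=h^{3,1}(Y)$.

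The only genuine obstacle is precisely the one the hypothesis removes: the Hochschild relations alone never separate $h^{2,0}$ from $h^{3,1}$, pinning down only the combination $2h^{2,0}+h^{3,1}$; the vanishing of $\chi(\OO_X)$ supplies exactly the missing linear relation, and without non-affineness of $\Aut^0(X)$ there is no reason for it to hold. Everything else is the routine bookkeeping of the fourfold Hodge diamond.
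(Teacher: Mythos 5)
Your argument is correct and is essentially the paper's own proof: the paper likewise reduces the fourfold Hochschild relations to the invariance of $h^{1,0}+h^{2,1}$ and of $2h^{2,0}+h^{3,1}$, uses $\chi(\OO_X)=0$ from non-affineness of $\Aut^0(X)$ (via Corollary \ref{chi}) together with the invariance of $h^{3,0}$ and $h^{4,0}$ to tie $h^{2,0}$ to $h^{1,0}$, and then invokes Corollary \ref{irregularity}. You have merely written out the bookkeeping that the paper leaves implicit.
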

\begin{proof}
The analogue of (\ref{hochschild}) for fourfolds implies that $h^{2,1}$ is invariant if and only if $h^{1,0}$ 
is invariant, and $h^{2,0}$ is invariant if and only if $h^{3,1}$ is invariant. On the other hand, if $\Aut^0(X)$
is not affine, then $\chi(\OO_X) = 0$ (cf. Lemma \ref{chi}), which implies that $h^{2,0}$ is invariant if and 
only if $h^{1,0}$ is invariant. We apply Corollary \ref{irregularity}.
\end{proof}

It is also worth noting that Corollary \ref{irregularity} can help in verifying the invariance of classification 
properties characterized numerically.  We exemplify with a quick proof of the following statement
(\cite{hn} Proposition 3.1):
\emph{If~ $\D(X) \simeq \D(Y)$, and $X$ is an abelian variety, then so is $Y$}. 
Indeed, the derived invariance of the pluricanonical series \cite{orlov} Corollary 2.1.9 and Theorem \ref{isogeny} imply that $P_1(Y) = P_2 (Y) = 1$ and $q(Y)  = \dim Y$. The main result of \cite{ch} implies that $Y$ is birational, so it actually has a birational morphism, to an abelian variety $B$. But $\omega_X \simeq \OO_X$, so $\omega_Y \simeq \OO_Y$ as well (see e.g. \cite{huybrechts} Proposition 4.1), and therefore $Y \simeq B$. 
\end{remark}

\providecommand{\bysame}{\leavevmode\hbox
to3em{\hrulefill}\thinspace}

\end{document}